\newtheorem{theorem}{Theorem}
\newtheorem{lemma}{Lemma}
\newtheorem{proposition}{Proposition}
\theoremstyle{remark}
\newtheorem{remark}{Remark}
\theoremstyle{definition}
\newtheorem{definition}{Definition}
\begin{document}

\begin{frontmatter}


\title{Unsupervised nonparametric detection of unknown objects in noisy images based on percolation theory}
\runtitle{Unsupervised detection and percolation}

\begin{aug}
\author{\snm{Mikhail} \fnms{Langovoy}\corref{}\thanksref{t2}
\ead[label=e1]{mikhail.langovoy@epfl.ch}}

\affiliation{EPFL, Switzerland}

\address{Machine Learning and Optimization Laboratory\\
       EPFL, Station 14 \\
       Lausanne, CH-1015 Switzerland\\
\printead{e1}}

\and

\author{\snm{Olaf} \fnms{Wittich} \ead[label=e2]{olaf.wittich@matha.rwth-aachen.de}}

\affiliation{RWTH Aachen, Germany}

\address{Lehrstuhl A f\"{u}r Mathematik \\ RWTH Aachen, 52056 Aachen \\
\printead{e2}}

\and

\author{\snm{Laurie} \fnms{Davies} \ead[label=e3]{laurie.davies@uni-due.de}}

\affiliation{University of California at Davis, USA}

\address{Department of Statistics, \\
University of California at Davis, Davis CA, \\
95616-8572, USA\\
\printead{e3}}

\thankstext{t2}{Corresponding author.}

\runauthor{M. Langovoy et al.}
\end{aug}

\smallskip


\begin{abstract}
We develop an unsupervised, nonparametric, and scalable statistical learning method for detection of unknown objects in noisy images. The method uses results from percolation theory and random graph theory. We present an algorithm that allows to detect objects of unknown shapes and sizes in the presence of nonparametric noise of unknown level. The noise density is assumed to be unknown and can be very irregular. The algorithm has linear complexity and exponential accuracy and is appropriate for real-time systems. We prove strong consistency and scalability of our method in this setup with minimal assumptions.
\end{abstract}


\begin{keyword}
\kwd{Nonparametric learning} \kwd{unsupervised learning} \kwd{object detection} \kwd{image analysis} \kwd{noisy image} \kwd{percolation} \kwd{extreme noise} \kwd{nonparametric hypothesis testing}
\end{keyword}

\end{frontmatter}

\section{Introduction}\label{Section1}


Detection of objects in noisy images is the most basic problem of image analysis. Indeed, when one looks at a noisy image, the first question to ask is whether there is any object hidden behind the noise, at all. This is also a primary question of interest in such diverse fields as, for example, cancer detection \citep{Cancer_Detection_1}, automated urban analysis \citep{Road_Detection_IEEE}, detection of cracks in buried pipes \citep{Sinha200658}, and other possible applications in astronomy, electron microscopy and neurology. Moreover, if there is just a random noise in the picture, it doesn't make sense to run computationally intensive procedures for image reconstruction for this particular picture. This is especially relevant in modern day applications to Internet data and in automated image processing systems, where one has to mine billions of images under time constraints. Surprisingly, the vast majority of image analysis methods, both in statistics and in engineering, avoid the pure detection problem and start immediately with the more challenging and computationally intensive task of image reconstruction.


\subsection{Related work}

As pixels in digital images can be viewed as network nodes with attributes, many applications from image processing, such as road tracking \citep{Geman_1996_Roads} or medical tumor detection \citep{Mcinerney_1996_Medical_Image}, can be treated within the framework of community detection in  networks. More specifically, these setups correspond to detection of communities hidden in large networks with noisy node attributes, where one decides on the existence of communities using both the network topology as well as the network's content represented by node attributes (see, e.g., \citep{Ruan_2013_Content_Links} or \citep{Yang_2013_Community_Node} for related types of setups).

We are only concerned with the existence of an object in the image, and not with estimating the object. We are also heavily using the fact that images are processed on computers only in a discretized form. For this problem and the setup, recently a new line of research emerged where discrete probability methods of statistical physics were applied to unsupervised community detection in discrete structures such as pixelized images or lattices \citep{langovoy_report_2009-035}, \citep{Langovoy_Wittich_Square}, \citep{langovoy_wittich_robust}, \citep{Arias-Castro_Grimmett}. The idea of applying percolation theory to study of hidden communities in networks, combined with variations of $k$-NN scans, proved useful in application areas like anomaly detection and automated detection of unknown objects in extremely noisy images \citep{langovoy_habeck_schoelkopf_JSM}, \citep{langovoy_habeck_schoelkopf}.




However, in \citep{Arias-Castro_Grimmett} only the case of parametric noise from a one-parameter exponential family was considered, while the present paper deals with the general case of nonparametric noise. Our bulk condition on the object interior is also more general than conditions on cluster sizes in \citep{Arias-Castro_Grimmett}. The algorithm in the present paper has linear computational complexity, irrespectively of a shape of an object. Papers \citep{langovoy_report_2009-035} and \citep{Langovoy_Wittich_Square} treated different kind of underlying lattices and had to resort to the case of nonparametric noise of bounded level, while \citep{langovoy_wittich_robust} had to limit the class of possible noise distributions via additional smoothness assumptions. In this paper, we establish a strong form of consistent detection for a much wider class of noise distributions, and \emph{completely remove smoothness assumptions} on the noise.



\subsection{Main contributions}

From the statistical point of view, we treat the object detection problem as a nonparametric hypothesis testing problem within the class of statistical inverse problems on networks. We assume that the noise density is completely unknown, and that it is not necessarily smooth or continuous. In this paper, we propose an algorithmic solution for this nonparametric hypothesis testing problem. We prove that our algorithm has linear complexity in terms of the number of pixels on the screen, and this procedure is not only asymptotically consistent, but on top of that has accuracy that grows exponentially with the "number of pixels" in the object of detection. The algorithm has a built-in data-driven stopping rule, so there is no need in human assistance to stop the algorithm at an appropriate step.

The crucial difference of our method is that we do not impose any shape or smoothness assumptions on the \emph{boundary} of the object. This permits the detection of nonsmooth, irregular or disconnected objects in noisy images, under very mild assumptions on the object's interior. This is especially suitable, for example, if one has to detect a highly irregular non-convex object in a noisy image. This is usually the case, for example, in the aforementioned fields of automated urban analysis, cancer detection and detection of cracks in materials. Although our detection procedure works for regular images as well, it is precisely the class of irregular images with unknown shape where our method can be very advantageous.

\subsection{Outline}

The paper is organized as follows. Our statistical model is described in details in Section \ref{Section_Model}. Proper type of thresholding for noisy images is crucial in our method and would allow us to apply percolation theory to our learning task. Both thresholding and percolation are described in Section \ref{Section_Thresholding}. An algorithm for object detection is presented in Section \ref{Section2}. Theorem \ref{Theorem1} established the strong consistency and scalability of the method in the setup with minimal assumptions on both the noise and the object of interest. An example illustrating possible applications of our method is given in Section \ref{Section_Example}. Section \ref{Proofs_Triangular_Lattice} is devoted to the proof of the main theorem.


\section{Statistical model}\label{Section_Model}

Assume we observe a noisy digital image on a screen of $N \times N$ pixels. Object detection and image reconstruction for noisy images are two of the cornerstone problems in image analysis. In this paper, we develop an efficient scalable robust technique for quick detection of objects in noisy images. 


In the present paper we are interested in detection of objects that have a known colour. This colour has to be different from the colour of the background. Mathematically, this is equivalent to assuming that the true (non-noisy) images are black-and-white, where the object of interest is black and the background is white.

Without loss of generality, we are free to assume that all the pixels that belong to the meaningful object within the digitalized image have the value 1 attached to them. We can call this value a \emph{black colour}. Additionally, assume that the value 0 is attached to those and only those pixels that do not belong to the object in the non-noisy image. If the number 0 is attached to the pixel, we call this pixel \emph{white}.


It is also assumed that on each pixel we have random noise that has the \emph{unknown} distribution function  $F$; the noise at each pixel is completely independent from noises on other pixels. It is important that we consider the case of a fully nonparametric noise of \emph{unknown} level and having an \emph{unknown} distribution.


More formally, we have an $N \times N$ array of observations, i.e. we observe $N^2$ real numbers ${\{Y_{ij}\}}_{i,j=1}^N$. Denote the true value on the pixel $(i, j)$, $1 \leq i, j \leq N$, by $Im_{ij}$, and the corresponding noise by $\sigma \varepsilon_{ij}$. According to the above,

\begin{equation}\label{1}
Y_{ij} = Im_{ij} + \sigma \, \varepsilon_{ij}\,,
\end{equation}

\noindent where $1 \leq i, j \leq N$, and $\{ \varepsilon_{ij}\}$, $1 \leq i, j \leq N$ are i.i.d., and

\begin{equation}\label{3}
Im_{ij} = \left\{
           \begin{array}{ll}
             1, & \hbox{if $(i,j)$ belongs to the object;} \\
             0, & \hbox{if $(i,j)$ does not belong to the object.}
           \end{array}
         \right.
\end{equation}

\noindent To stress the dependence on the noise level $\sigma$, we write our assumption on the noise in the following way:

\begin{equation}\label{2}
\varepsilon_{ij} \sim F, \quad \mathbb{E}\, \varepsilon_{ij} = 0,  \quad Var\, \varepsilon_{ij} = 1\,.
\end{equation}


\noindent Throughout this paper we will additionally assume that the following non-degeneracy assumption holds. \medskip

\begin{equation}\label{22}
\langle  \textbf{A} \rangle\quad\quad\quad F (t) \equiv C \quad\text{for all}\quad t \in (a, b) \quad \Rightarrow \quad b-a < 1.
\end{equation}
\medskip

\noindent The \emph{null hypothesis} is $H_0:\,Im_{i, j} = 0$ for all $i, j$. The alternative hypothesis is $H_1:\, Im_{ij} \neq 0$ for some $i, j$.


Now we can proceed to preliminary quantitative estimates. If a pixel $(i, j)$ is white in the original image, let us denote the corresponding probability distribution of $Y_{ij}$ by $P_0$. For a black pixel $(i, j)$ we denote the corresponding distribution of $Y_{ij}$ by $P_1$. We are free to omit dependency of $P_0$ and $P_1$ on $i$ and $j$ in our notation, since all the noises are independent and identically distributed.

The following simple observation will be used to link community detection in triangular networks and percolation theory.

\begin{proposition}\label{Proposition_1}
If $\langle A \rangle$ holds and the distribution of the noise distribution is symmetric, then

\begin{eqnarray}
  P_0 (\,Y_{ij} \geq 1/2\,) &<& 1/2\,,\label{23} \\
  1/2 &<& P_1 (\,Y_{ij} \geq 1/2\,)\,.\label{24}
\end{eqnarray}

\end{proposition}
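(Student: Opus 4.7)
My plan is to reduce both (23) and (24) to a single assertion, namely that $F$ charges the open interval $(-1/(2\sigma),\,1/(2\sigma))$ with strictly positive mass, and then to extract this from $\langle A\rangle$ combined with the symmetry hypothesis.

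For (23), under $P_0$ one has $Y_{ij}=\sigma\varepsilon_{ij}$, so $P_0(Y_{ij}\ge 1/2)=P(\varepsilon\ge 1/(2\sigma))$. Partitioning $\R$ into the three sets $\{\varepsilon\le-1/(2\sigma)\}$, $\{|\varepsilon|<1/(2\sigma)\}$ and $\{\varepsilon\ge 1/(2\sigma)\}$ and using symmetry to equate the two tail probabilities, I obtain
\[
2\,P\bigl(\varepsilon\ge 1/(2\sigma)\bigr)\;=\;1-P\bigl(|\varepsilon|<1/(2\sigma)\bigr),
\]
so (23) is equivalent to $P(|\varepsilon|<1/(2\sigma))>0$. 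If this probability were zero, $F$ would be constant on the open interval $(-1/(2\sigma),\,1/(2\sigma))$, of length $1/\sigma$; under the standing regime $\sigma\le 1$ (needed for a unit signal to be distinguishable at threshold $1/2$), that length is at least $1$, contradicting $\langle A\rangle$.

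For (24) I would deduce the inequality directly from (23). Writing $Y_{ij}=1+\sigma\varepsilon_{ij}$ under $P_1$ and using $-\varepsilon\stackrel{d}{=}\varepsilon$,
\[
P_1(Y_{ij}\ge 1/2)\;=\;P\bigl(\sigma\varepsilon\ge -1/2\bigr)\;=\;1-P\bigl(\varepsilon>1/(2\sigma)\bigr),
\]
and the last probability is at most $P(\varepsilon\ge 1/(2\sigma))<1/2$ by (23), which yields (24).

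The main obstacle is exactly this strict inequality. Symmetry alone yields only the weak bounds $P_0(Y_{ij}\ge 1/2)\le 1/2\le P_1(Y_{ij}\ge 1/2)$, with equality attained, for instance, by a noise supported on $\{\pm c\}$ with $c\ge 1/(2\sigma)$. Ruling out precisely this kind of concentration away from $0$ is the role of $\langle A\rangle$, so the delicate point is verifying that the interval on which $F$ would have to be constant is long enough for $\langle A\rangle$ to apply.
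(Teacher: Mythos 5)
Your proof is correct and follows essentially the same route as the paper's: both arguments use symmetry to reduce the two strict inequalities to the single claim that the noise puts positive mass on an open interval centred at $0$, and then invoke $\langle A\rangle$ to rule out $F$ being constant there (you prove (23) first and deduce (24); the paper does the reverse, and your three-set partition also handles possible atoms at $0$ more cleanly than the paper's step $P(\varepsilon<0)=1/2$). The only substantive difference is your explicit bookkeeping of $\sigma$: the paper's proof silently takes $\sigma=1$ (it writes $P(\varepsilon+1>1/2)$ for $P_1(Y_{ij}\ge 1/2)$), and your observation that $\langle A\rangle$ only covers the interval $(-1/(2\sigma),\,1/(2\sigma))$ when $\sigma\le 1$ --- a restriction the paper never actually states --- is a correct and worthwhile catch about the statement itself rather than a flaw in your argument.
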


\begin{proof}(Proposition \ref{Proposition_1})
Since the noise is symmetric, assumption $\langle A \rangle$ yields

\begin{eqnarray*}
  P_1 (\,Y_{ij} \geq 1/2\,) &=& P(\,\varepsilon + 1 > 1/2\,) \\
    &=& P(\,\varepsilon > - 1/2\,) \\
    &=& P(\,\varepsilon < 1/2\,)   \\
    &>& P(\,\varepsilon < 0\,) = 1/2\,.
\end{eqnarray*}

\noindent For the other part, we have in view of the previous calculation

\begin{eqnarray*}
  P_0 (\,Y_{ij} \geq 1/2\,) &=& P(\,\varepsilon  \geq 1/2\,) \\
    &=& 1 - P(\,\varepsilon < 1/2\,) \\
    &<& 1/2\,.
\end{eqnarray*}

\noindent This completes the proof.
\end{proof}

\section{Thresholding and percolation on triangular lattices}\label{Section_Thresholding}

As was shown in \citep{Langovoy_Wittich_Square}, \citep{langovoy_wittich_robust}, \citep{Arias-Castro_Grimmett}, percolation-based detection methods are applicable to more general types of networks than lattices or regular graphs. In this paper, in order to obtain strong stability against a very wide nonparametric class of noise distributions, we decided to stick to graphs with critical probability $0.5$, of which the triangular lattice is the most natural example.

\subsection{Thresholding}

Now we are ready to describe one of the main ingredients of our method: the \emph{thresholding}. The idea of the thresholding is as follows: in the noisy grayscale image ${\{Y_{ij}\}}_{i,j=1}^N$, we pick some pixels that look as if their real colour was black. Then we colour all those pixels black, irrespectively of the exact value that was observed on them. We take into account the intensity observed at those pixels only once, in the beginning of our procedures. The idea is to think that some pixel "seems to have a black colour" when it is not very likely to obtain the observed grey value when adding a "reasonable" noise to a white pixel.

We colour white all the pixels that weren't coloured black at the previous step. At the end of this procedure, we would have a transformed vector of 0's and 1's, call it $\{\overline{Y}_{i,j}\}_{i,j=1}^{N}$. We will be able to analyse this transformed picture by using certain results from the mathematical theory of percolation.


Let us fix, for each $N$, a real number $\alpha_0(N) > 0$, $\alpha_0(N) \leq 1$, such that there exists $\theta (N) \in \mathbb{R}$ satisfying the following condition:

\begin{equation}\label{6}
P_0 (\,Y_{ij} \geq \theta (N)\, ) \,\leq\, \alpha_0(N)\,.
\end{equation}

In this paper we will always pick $\alpha_0(N)\,\equiv\,\alpha_0$ for all $N \in \mathbb{N}$, for some constant $\alpha_0 > 0$.


As a first step, we transform the observed noisy image $\{Y_{i,j}\}_{i,j=1}^{N}$ in the following way: for all $1 \leq i, j \leq N$,\par\smallskip

1. $\quad$ If $Y_{ij} \geq \theta (N)$, set $\overline{Y}_{ij} := 1$ (i.e., in the transformed picture the corresponding pixel is coloured black).\smallskip

2. $\quad$ If $Y_{ij} < \theta (N)$, set $\overline{Y}_{ij} := 0$ (i.e., in the transformed picture the corresponding pixel is coloured white).\smallskip

\begin{definition}\label{Definition1}
The above transformation is called \emph{thresholding at the level} $\theta (N)$. The resulting array $\{\overline{Y}_{i,j}\}_{i,j=1}^{N}$ of $N^2$ values (0's and 1's) is called a \emph{thresholded picture}.
\end{definition}
\smallskip

\subsection{Percolation}

One can think of pixels from $\{\overline{Y}_{i,j}\}_{i,j=1}^{N}$ as of vertices of a planar graph. Let us colour these $N^2$ vertices with the same colours as the corresponding pixels. We obtain a graph $G_N$ with $N^2$ black or white vertices and (so far) no edges.

We add edges to $G_N$ in the following way. If any two \emph{black} vertices are "neighbours" (in a way to be specified below), we connect these two vertices with a \emph{black} edge. If any two \emph{white} vertices are neighbours, we connect them with a \emph{white} edge. We will not add any edges between non-neighbouring points, and we will not connect vertices of different colours to each other.

It is crucial how one defines \emph{neigbourhoods} for vertices of $G_N$: different definitions can lead to testing procedures with very different properties. The first and a very natural way is to view $G_N$ as an $N \times N$ square subset of the $\mathbb{Z}^2$ lattice. The method works in this case, see \citep{langovoy_report_2009-035} and \citep{Langovoy_Wittich_Square}. However, it turns out that the method becomes especially robust when we view our black and white pixelized picture as a collection of black and white clusters on an $N \times N$ subset of the \emph{triangular} lattice $\mathbb{T}^2$ (obtained from $\mathbb{Z}^2$ lattice by adding diagonals to every square on the lattice). In the present paper, we will work exclusively with triangular lattices.

We perform $\theta (N)-$thresholding of the noisy image $\{Y_{i,j}\}_{i,j=1}^{N}$ using with a very special value of $\theta (N)$. Our goal is to choose $\theta (N)$ (and corresponding $\alpha_0(N)$, see (\ref{6})) such that:

\begin{eqnarray}
  P_0 (\,Y_{ij} \geq \theta (N)\,) &<& p_{c}^{site}\,,\label{8} \\
  p_{c}^{site} &<& P_1 (\,Y_{ij} \geq \theta (N)\, )\,,\label{9}
\end{eqnarray}

\noindent where $p_{c}^{site}$ is the critical probability for site percolation on $\mathbb{T}^2$ (see \citep{Grimmett}, \citep{Kesten}).


Since $G_N$ is random, we actually observe the so-called \emph{site percolation} on black vertices within the subset of $\mathbb{T}^2$. From this point, we can use results from percolation theory to predict formation of black and white clusters on $G_N$, as well as to estimate the number of clusters and their sizes and shapes. Relations (\ref{8}) and (\ref{9}) are crucial here.

To explain this more formally, let us split the set of vertices $V_N$ of the graph $G_N$ into to groups: $V_N = V_{N}^{im} \cup V_{N}^{out}$, where $V_{N}^{im} \cap V_{N}^{out} = \emptyset$, and $V_{N}^{im}$ consists of those and only those vertices that correspond to pixels belonging to the original object, while $V_{N}^{out}$ is left for the pixels from the background. Denote $G_{N}^{im}$ the subgraph of $G_N$ with vertex set $V_{N}^{im}$, and denote $G_{N}^{out}$ the subgraph of $G_N$ with vertex set $V_{N}^{out}$.

If (\ref{8}) and (\ref{9}) are satisfied, we will observe a so-called \emph{supercritical percolation} of black clusters on $G_{N}^{im}$, and a \emph{subcritical} percolation of black clusters on  $G_{N}^{out}$. Without going into much details on percolation theory (the necessary introduction can be found in \citep{Grimmett} or \citep{Kesten}), we mention that there will be a high probability of forming relatively large black clusters on $G_{N}^{im}$, but there will be only little and scarce black clusters on $G_{N}^{out}$. The difference between the two regions will be striking, and this is the main component in our image analysis method.

In this paper, mathematical percolation theory will be used to derive quantitative results on behaviour of clusters for both cases. We will apply those results to build efficient randomized algorithms that will be able to detect and estimate the object $\{Im_{i,j}\}_{i,j=1}^{N}$ using the difference in percolation phases on $G_{N}^{im}$ and $G_{N}^{out}$.

But when can the key inequalities (\ref{8}) and (\ref{9}) be simultaneously satisfied for an appropriate threshold $\theta$? The following important proposition shows that, under very mild conditions, our method is asymptotically consistent for any noise level.

\begin{proposition}\label{Proposition_2}
On the triangular lattice (\ref{8}) and (\ref{9}) are always satisfied for $\theta = 1/2$.
\end{proposition}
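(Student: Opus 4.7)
The plan is essentially to combine the already-proved Proposition~\ref{Proposition_1} with a single classical fact from percolation theory, namely the exact value of the site-percolation critical probability on the triangular lattice. More precisely, I would argue as follows.

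First, I would invoke the well-known result of Kesten (see \cite{Kesten}, also discussed in \cite{Grimmett}) stating that
\[
 p_{c}^{\mathrm{site}}(\mathbb{T}^2) \;=\; \tfrac{1}{2}.
\]
This is the one nontrivial ingredient; everything else reduces to plugging in. I would cite it rather than reprove it, since it is not the subject of the current paper.

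Next, I would set $\theta(N)\equiv 1/2$ and simply rewrite the conclusion of Proposition~\ref{Proposition_1}. That proposition gives $P_0(Y_{ij}\ge 1/2)<1/2$ and $P_1(Y_{ij}\ge 1/2)>1/2$ under assumption $\langle A\rangle$ and symmetry of the noise. Substituting $p_c^{\mathrm{site}}=1/2$ into the right-hand side of the first inequality and into the left-hand side of the second, we obtain exactly (\ref{8}) and (\ref{9}):
\begin{eqnarray*}
P_0(Y_{ij}\ge 1/2) &<& 1/2 \;=\; p_c^{\mathrm{site}}, \\
p_c^{\mathrm{site}} \;=\; 1/2 &<& P_1(Y_{ij}\ge 1/2).
\end{eqnarray*}
This establishes the proposition.

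There is essentially no obstacle here: the conceptual work has been packaged into Proposition~\ref{Proposition_1} (which does the analytic bookkeeping with assumption $\langle A\rangle$ and the symmetry of $F$) and into the deep percolation fact that $p_c^{\mathrm{site}}(\mathbb{T}^2)=1/2$. The role of this short proposition is really to highlight the remarkable coincidence that the threshold $\theta=1/2$ suggested by the statistical model (the midpoint between the black value $1$ and the white value $0$) matches exactly the critical value for site percolation on the triangular lattice, which is precisely why the method is claimed to be fully nonparametric on $\mathbb{T}^2$ in a way that it would not be on $\mathbb{Z}^2$, where $p_c^{\mathrm{site}}$ is not known in closed form and in particular is not $1/2$.
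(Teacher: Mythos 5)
Your proof is correct and takes exactly the same route as the paper: cite $p_c^{\mathrm{site}}(\mathbb{T}^2)=1/2$ from Kesten and combine it with Proposition~\ref{Proposition_1}. Nothing is missing.
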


\begin{proof}(Proposition \ref{Proposition_2})
For the planar triangular lattice one has $p_{c}^{site} = 1/2$ (see \citep{Kesten}). The statement follows from Proposition \ref{Proposition_1}.
\end{proof}

Proposition \ref{Proposition_2} explains the main reason for working with the triangular lattice: for this lattice, the method is asymptotically consistent for any noise level, and the natural threshold $\theta (N) = 1/2$ is always appropriate. As we will see in the following section, this makes our testing procedure applicable in the case of unknown and nonsmooth nonparametric noise.

\section{Object detection}\label{Section2}

We either observe a blank white screen with accidental noise or there is an actual object in the blurred picture. In this section, we propose an algorithm to make a decision on which of the two possibilities is true. This algorithm is a statistical testing procedure. It is designed to solve the question of testing $H_0:\, I_{ij}=0\,\, \mbox{for all}\,\, 1 \leq i, j \leq N$ versus $H_1:\, I_{ij} = 1 \,\,\mbox{for some}\,\, i, j$.

Let us choose $\alpha (N) \in (0, 1)$ - the \emph{probability of false detection} of an object. More formally, $\alpha (N)$ is the maximal probability that the algorithm finishes its work with the decision that there was an object in the picture, while in fact there was just noise. In statistical terminology, $\alpha (N)$ is the probability of an error of the first kind. We allow $\alpha$ to depend on $N$; $\alpha(N)$ is connected with complexity (and expected working time) of our randomized algorithm.

Since in our method it is crucial to observe some kind of percolation in the picture (at least within the image), the image has to be "not too small" in order to be detectable by the algorithm: one can't observe anything percolation-alike on just a few pixels. We will use percolation theory to determine how "large" precisely the object has to be in order to be detectable. Some size assumption has to be present in any detection problem, though: for example, it is mathematically hopeless to detect a single point on a very large screen even in the case of a moderate noise. We proceed with the following weak assumption about the object's interior part: \par\smallskip



$\langle \textbf{B} \rangle \quad$ The object contains a black square with the side of size at least $\varphi_{im}(N)$ pixels, where


\begin{equation}\label{12}
\lim_{N\to\infty} \frac{\,\log \frac{1}{\,\alpha(N)\,}\,}{\,\varphi_{im}(N)\,}\, = 0\,.
\end{equation}


\begin{equation}\label{14}
 \lim_{N \to\infty} \frac{\,\varphi_{im} (N)\,}{\,\log N\,} = \infty\,.
\end{equation}
\medskip



\noindent Assumption $\langle B \rangle$ is a \emph{sufficient} condition for the algorithm to work. For example, it is possible to relax (\ref{14}) and to replace a square in $\langle B \rangle$ by a triangle-shaped figure. Although conditions (\ref{12}) and (\ref{14}) are of asymptotic character, most of the estimates used in our method are valid for finite $N$ as well. For obvious consistency reasons, $\varphi_{im}(N)\, \leq N\,$.


Now we are ready to formulate our \emph{Detection Algorithm} (see Algorithm 1). Fix the false detection rate $\alpha (N)$ before running the algorithm.

\begin{algorithm}[t]
   \caption{Detection}
   \label{Algorithm_Detection}
\begin{algorithmic}[1]
   \STATE \textsf{\small Step 0.} Find an optimal $\theta (N)$ (in our framework $\theta (N) := 1/2$).
   \STATE \textsf{\small Step 1.} Perform $\theta(N)-$thresholding of the noisy picture $\{Y_{i,j}\}_{i,j=1}^{N}$.
   \STATE \textsf{\small Step 2.} \REPEAT \STATE Run depth-first search \citep{Tarjan} on the graph $G_N$ of the $\theta(N)-$thresholded picture $\{{\overline{Y}}_{i,j}\}_{i,j=1}^{N}$
   \UNTIL{\{\{Black cluster of size $\varphi_{im} (N) $ is found\} or \{all black clusters are found\}\}}
   \STATE \textsf{\small Step 3.}
   \IF{black cluster of size $\varphi_{im} (N) $ was found}
   \STATE Output: an object was detected
   \ELSE
   \STATE Output: there is no object.
   \ENDIF
\end{algorithmic}
\end{algorithm}

%
%
%
%
%
%
%
%
%
%

\noindent At Step 2 our algorithm finds and stores not only sizes of black clusters, but also coordinates of pixels constituting each cluster. We remind that $\theta(N)$ is defined as in (\ref{6}), $G_N$ and $\{{\overline{Y}}_{i,j}\}_{i,j=1}^{N}$ were defined in Section \ref{Section_Thresholding}, and $\varphi_{im} (N)$ is any function satisfying (\ref{12}). The depth-first search algorithm is a standard procedure used for searching connected components on graphs. This procedure is a deterministic algorithm. The detailed description and rigorous complexity analysis can be found in \citep{Tarjan}, or in the classic book \citep{Aho_Hopcroft_Ullman}, Chapter 5.

Let us prove that Algorithm 1 works, and determine its complexity.

\begin{theorem}\label{Theorem1}
Suppose assumptions $\langle A \rangle$ and $\langle B \rangle$ are satisfied and the noise is symmetric. Then\smallskip

\begin{enumerate}
  \item Algorithm 1 finishes its work in $O(N^2)$ steps, i.e. is linear.\medskip

  \item If there was an object in the picture, Algorithm 1 detects it with probability at least $(1 - \exp(-C_1(\sigma) \varphi_{im} (N)))$.\medskip

  \item The probability of false detection doesn't exceed $\min\{\alpha (N), \exp(-C_2(\sigma) \varphi_{im} (N)) \}$ for all $N > N(\sigma)$.
\end{enumerate}

\noindent The constants $C_1 > 0$, $C_2 > 0$ and $N(\sigma)\in\mathbb{N}$ depend only on $\sigma$.

\end{theorem}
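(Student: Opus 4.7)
The three claims separate cleanly and I would handle them in order. For the complexity statement (1), the thresholding step visits each of the $N^2$ pixels exactly once, and $G_N$ is a subgraph of the triangular lattice in which every vertex has degree at most six, so $|E(G_N)|=O(N^2)$. Depth-first search on such a graph runs in $O(|V|+|E|)=O(N^2)$ time by the standard analysis in \cite{Tarjan, Aho_Hopcroft_Ullman}, and the stopping rule in Step 2 can only shorten the run. Summing the $O(N^2)$ costs of Steps 0--4 yields the asserted linear complexity.

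For the power statement (2), the plan is to reduce to supercritical site percolation inside the object. Under $H_1$, assumption $\langle D1\rangle$ produces a square $S$ of side $\varphi_{im}(N)$ consisting of true black pixels. After thresholding at $\theta=1/2$, the values $\{\overline{Y}_{ij}\}_{(i,j)\in S}$ are i.i.d.\ Bernoulli with parameter $p_1:=P_1(Y_{ij}\geq 1/2)$, and Propositions \ref{Proposition_1}--\ref{Proposition_2} give $p_1>1/2=p_c^{site}$. Hence $S$ carries strictly supercritical site percolation on $\mathbb{T}^2$. The classical exponential bound for the probability of a horizontal crossing in the supercritical phase (see \cite{Grimmett}, \cite{Kesten}) produces a constant $C_1=C_1(p_1)>0$ such that a black crossing of $S$ exists with probability at least $1-\exp(-C_1\varphi_{im}(N))$. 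Any such crossing contains at least $\varphi_{im}(N)$ vertices, so Steps 2--3 of the algorithm report detection with that probability; absorbing the dependence of $p_1$ on $F$ and $\sigma$ into the notation $C_1(\sigma)$ gives the claim.

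For the false detection bound (3), I would mirror the previous argument. Under $H_0$ every $\overline{Y}_{ij}$ is i.i.d.\ Bernoulli with parameter $p_0:=P_0(Y_{ij}\geq 1/2)<1/2=p_c^{site}$, so $G_N$ hosts strictly subcritical percolation. Menshikov's theorem (see \cite{Grimmett}, \cite{Kesten}) supplies a constant $c=c(p_0)>0$ with $\mathbb{P}(|C(v)|\geq k)\leq\exp(-ck)$ for every vertex $v$ and every $k\in\mathbb{N}$. A union bound over the $N^2$ potential origins of a large cluster gives
\begin{equation*}
\mathbb{P}_{H_0}\bigl(\exists\text{ black cluster of size } \geq \varphi_{im}(N)\bigr) \leq N^2\exp(-c\,\varphi_{im}(N)).
\end{equation*}
Assumption $\langle D2\rangle$, i.e.\ $\varphi_{im}(N)/\log N\to\infty$, absorbs the $N^2$ factor for all $N>N(\sigma)$ and produces a bound of the form $\exp(-C_2\varphi_{im}(N))$ with $C_2=C_2(\sigma)>0$. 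Assumption $\langle D1\rangle$, i.e.\ $\log(1/\alpha(N))=o(\varphi_{im}(N))$, then forces $\exp(-C_2\varphi_{im}(N))\leq\alpha(N)$ for $N$ large enough, so both entries of the stated minimum are dominated.

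The routine pieces are the DFS complexity, the degree count on $\mathbb{T}^2$, and the union bound. The substantive input, and the place where I expect the main work to lie, is the correct invocation of two classical but nontrivial finite-size percolation estimates on the triangular lattice: exponential decay of subcritical cluster sizes, and exponential closeness to one of supercritical crossing probabilities. Care must be taken to trace the constants $C_1,C_2$ as explicit positive functions of the gaps $p_1-p_c^{site}$ and $p_c^{site}-p_0$, so that they depend only on $\sigma$ as the theorem demands; strict positivity of those gaps for every admissible noise level is exactly what Proposition \ref{Proposition_1} guarantees, which is why assumption $\langle A\rangle$ together with symmetry of the noise is enough to close the argument uniformly.
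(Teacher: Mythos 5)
Your proposal is correct and follows essentially the same route as the paper: DFS linearity for the complexity claim, a supercritical left--right crossing of the guaranteed $\varphi_{im}(N)\times\varphi_{im}(N)$ square for the power bound, and subcritical exponential decay of cluster sizes plus a union bound over the $N^2$ pixels, with $\langle D2\rangle$ absorbing the polynomial factor, for the false-detection bound. The only differences are cosmetic: the paper dresses the union bound up via FKG and a binomial expansion (arriving at the same $N^2 e^{-\lambda n}$ leading term) and derives the supercritical crossing estimate from the subcritical one via self-duality of $\mathbb{T}^2$ rather than citing it, while you are in fact more careful than the paper in checking that $\langle D1\rangle$ makes the exponential term dominate $\alpha(N)$ in the stated minimum.
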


Theorem \ref{Theorem1} means that Algorithm 1 is of quickest possible order: it is \emph{linear} in the input size in the worst case. Theorem \ref{Theorem1} also implies that the algorithm has computational complexity $O(\varphi_{im}(N))$ if the starting point of the depth-first search was close enough to the object. It is difficult to think of an algorithm working quicker in this problem. Indeed, if the image is very small and located in an unknown place on the screen, or if there is no image at all, then any algorithm solving the detection problem will have to at least upload information about $O(N^2)$ pixels, i.e. under general assumptions of Theorem \ref{Theorem1}, any detection algorithm will have at least linear complexity.

Another important point is that Algorithm 1 is not only consistent, but that it has \emph{exponential} rate of accuracy, typically achievable only for  parametric or sufficiently smooth models.

\begin{remark}
\emph{It is also interesting to remark here that, although it is assumed that the object of interest contains a $\varphi_{im}(N) \times \varphi_{im}(N)$ black square, one cannot use a very natural idea of simply considering sums of values on all squares of size $\varphi_{im}(N) \times \varphi_{im}(N)$ in order to detect an object. Neither some sort of thresholding can be avoided, in general. Indeed, although this simple idea works very well for normal noise, it cannot be used in case of unknown and possibly irregular or heavy-tailed noise. For example, for heavy-tailed noise, detection based on non-thresholded sums of values over subsquares will lead to a high probability of false detection. Whereas the method of the present paper still works.}
\end{remark}

\section{Example}\label{Section_Example}

In this section, we outline an example illustrating possible applications of our method. We start with a real greyscale picture of a neuron (see Fig. \ref{Picture_Neuron}). This neuron is an irregular object with unknown shape, and our method can be very advantageous in situations like this.

\begin{figure}[h]
\begin{center}
\includegraphics[width=90mm]{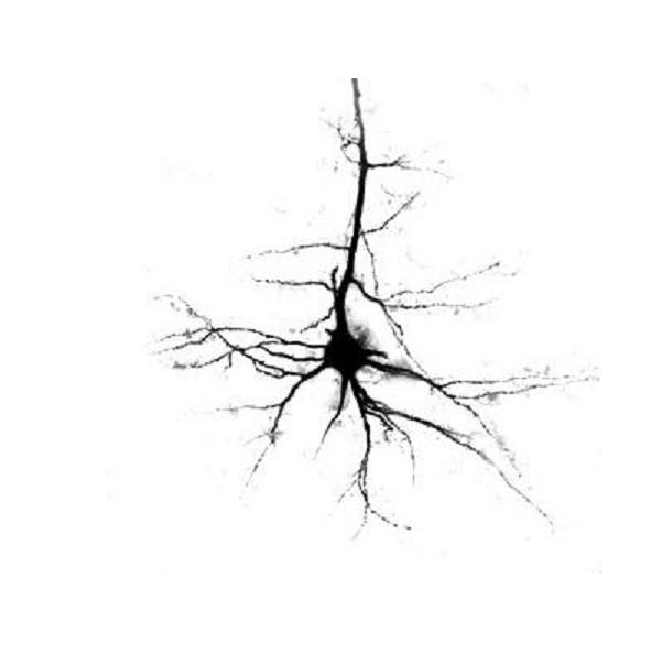}
\caption{A part of a real neuron.}
\label{Picture_Neuron}
\end{center}
\end{figure}

Basing on this real picture, we perform the following simulation study. We add Gaussian noise of level $\sigma = 1.8$ independently to each pixel in the image, and then we run Algorithm 1 on this noisy picture. A typical version of a noisy picture with this relatively strong noise can be seen on Fig. \ref{Picture_Neuron_Noise}. We run the algorithm on 1000 simulated pictures. Note that we used Gaussian noise for illustrative purposes only. We did not make any use neither of the fact that the noise is normal nor of our knowledge of the actual noise level.

As a result, the neuron was detected in 96.8\% of all cases. At the same time, the probability of false detection was shown to be below 5\%. Now we describe our experiment in more details.

\begin{figure}[h]
\begin{center}
\includegraphics[width=90mm]{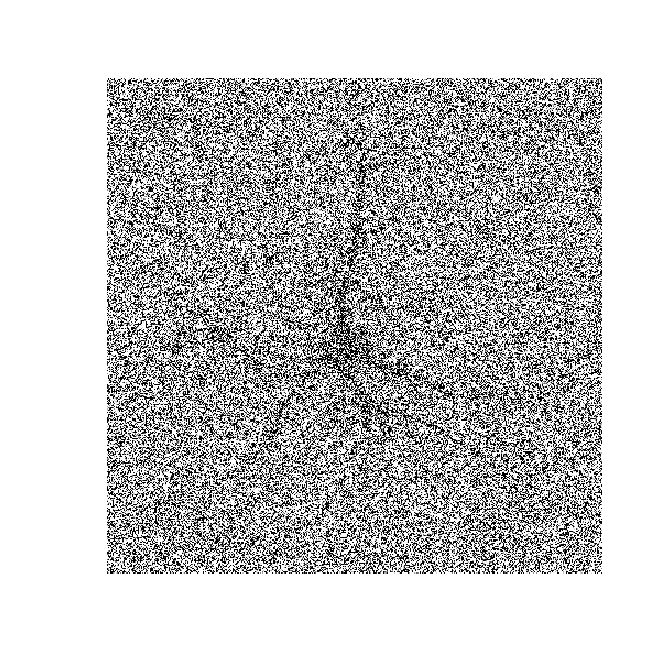}
\caption{A noisy picture.}
\label{Picture_Neuron_Noise}
\end{center}
\end{figure}

The starting picture (see Fig. \ref{Picture_Neuron}) was $450 \times 450$ pixels. White pixels have value 0 and black pixels have value 1. Some pixels were grey already in the original picture, but in practice this doesn't spoil the detection procedure.

We used as a threshold $\theta = 0.5$. The thresholded version of Fig. \ref{Picture_Neuron_Noise} is shown on Fig. \ref{Picture_Neuron_Threshold}. As follows from Theorem \ref{Theorem1}, our testing procedure is asymptotically consistent. We have chosen $\sigma = 1.8$ in our simulation study. In practice, Algorithm 1 can be consistently used for stronger noise levels for images of this size.

\begin{figure}[h]
\begin{center}
\includegraphics[width=90mm]{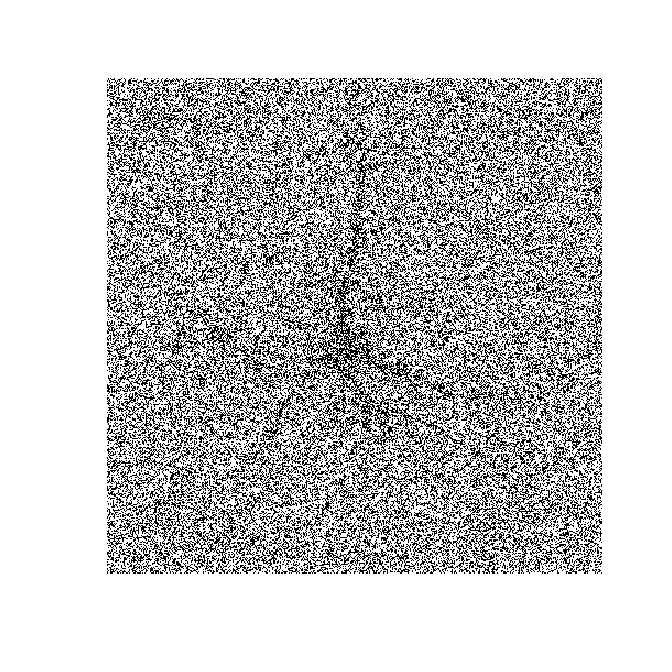}
\caption{A thresholded picture.}
\label{Picture_Neuron_Threshold}
\end{center}
\end{figure}

Suppose the null hypothesis is true, i.e. there is no signal in the original picture. By running Algorithm 1 on empty pictures of size $450 \times 450$ with simulated noise of level $\sigma = 1.8$ and $\theta = 0.5$, one can find that with probability more than 95\% there will be no black cluster of size 304 or more on the thresholded picture. Therefore, we considered as significant only those clusters that had more than 304 pixels. A different and much more efficient way of calculating $\varphi (N)$ for moderate sizes of $N$ is proposed in \citep{langovoy_wittich_report_R}.

For moderate sample sizes, the algorithm is applicable in many situations that are not covered by Theorem 1. The object, of course, doesn't have to contain a square of size $303 \times 303$ in order to be detectable. In particular, for noise level $\sigma = 1.8$, even objects containing a $40 \times 40$ square are consistently detected. The neuron on Fig. \ref{Picture_Neuron} passes this criterion, and Algorithm 1 detected the neuron 968 times out of 1000 runs.


We remark here that the algorithm is also very quick in practice. For example, its realization in Python typically requires less than 1 second to process a 4000 by 4000 image on a personal computer.

\section{Proofs}\label{Proofs_Triangular_Lattice}

Before proving the main result, we shall state first the following theorem about subcritical site percolation on the standard triangular lattice $\mathbb{T}^2$.

\begin{theorem}\label{Theorem3}
Consider site percolation with probability $p_0$ on $\mathbb{T}^2$. There exists a constant $\lambda_{site} = \lambda_{site}(p_0) > 0$ such that

\begin{equation}\label{17}
P_{p_0} (\,|C| \geq n\,)\,\leq\, e^{-n\,\lambda_{site}(p_0)}\,,\quad\mbox{for all}\quad n \geq N(p_0)\,.
\end{equation}

\noindent Here $C$ denotes the open cluster containing the origin.
\end{theorem}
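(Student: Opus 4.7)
The plan is to reduce the statement to the classical theorem on exponential decay of the cluster size in subcritical site percolation. Since $\lambda_{site}(p_0) > 0$ is required, the assertion is meaningful only for $p_0 < p_c^{site}(\mathbb{T}^2) = 1/2$, which I would assume throughout the proof (and which is the regime relevant to the later use of this theorem, cf.\ Proposition \ref{Proposition_2}).

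The first step is to invoke the sharpness of the phase transition for site percolation on $\mathbb{T}^2$, treated in \cite{Grimmett} and \cite{Kesten} (going back to the original theorems of Menshikov and of Aizenman--Barsky). This provides a constant $\psi(p_0) > 0$ such that the radius of the open cluster at the origin satisfies $P_{p_0}(\mathrm{rad}(C) \geq n) \leq e^{-n\psi(p_0)}$ for all sufficiently large $n$, where $\mathrm{rad}(C)$ denotes the graph distance from the origin to the farthest vertex in $C$.

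The second step is to pass from exponential decay of the radius to exponential decay of the volume. Merely using that $|C| \geq n$ forces $\mathrm{rad}(C) \geq c\sqrt{n}$ on a two-dimensional bounded-degree lattice yields only a stretched-exponential bound of the form $e^{-c'\sqrt{n}}$, which is too weak for (\ref{17}). Instead I would appeal to the stronger Aizenman--Newman-type estimate stated in \cite{Grimmett}: in the subcritical phase, the generating function $\sum_{n \geq 1} P_{p_0}(|C| = n)\, z^n$ has radius of convergence strictly greater than $1$. This is precisely equivalent to the tail bound (\ref{17}), with $\lambda_{site}(p_0)$ equal to any positive number strictly less than the logarithm of that radius of convergence, and with $N(p_0)$ chosen so that the tail is dominated by the leading exponential.

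The main obstacle is this second step, since exponential volume decay is genuinely stronger than exponential radius decay. Its proof can be organized either by combining Menshikov's differential inequality with a lattice-animals counting argument on $\mathbb{T}^2$ (whose connective constant is finite), or by a BK-inequality and renewal-type argument in the subcritical phase; both routes are standard and fully documented in \cite{Grimmett}. For the present purposes it is enough to quote the result, which applies verbatim to site percolation on $\mathbb{T}^2$ once one fixes $p_c^{site} = 1/2$ via Proposition \ref{Proposition_2}.
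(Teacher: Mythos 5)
Your proposal is correct and follows essentially the same route as the paper: both reduce the claim to the classical exponential decay of the subcritical cluster-size distribution (the paper cites Theorem 5.1 of \cite{Kesten} directly, you cite the equivalent Aizenman--Newman/generating-function formulation from \cite{Grimmett}) and then absorb the multiplicative prefactor into the exponent by slightly decreasing the decay rate and enlarging $N(p_0)$. Your observation that the hypothesis $p_0 < p_c^{site}$ is needed for the statement to hold is accurate; the paper leaves this assumption implicit in both the statement and the proof.
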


\begin{proof} (Theorem \ref{Theorem3}): The triangular lattice satisfies conditions of the Theorem 5.1 in \citep{Kesten}, p.83. Therefore, the second part of that Theorem (see equations (5.12)-(5.14) and the conclusion following them) ensures that there exist constants $C_1 = C_1 (p_0) >0$, $C_2 = C_2 (p_0) >0$ such that

\begin{equation}\label{25}
P_{p_0} (\,|C| \geq n\,)\,\leq\, C_1(p_0)\,e^{-n\,C_2(p_0)}\,,\quad\mbox{for all}\quad n \geq 1\,.
\end{equation}

\noindent If $C_1 \leq 1$, then (\ref{17}) follows immediately. Otherwise, (\ref{17}) follows from (\ref{25}) for all $n \geq N(p_0)$ and any $\lambda_{site}(p_0):= C_3 = C_3(p) > 0$ such that $N(p_0)$ and $C_3$ satisfy the inequality

\begin{equation}\label{26}
N(p_0)\,(C_2 - C_3)\,\geq\,\log C_1\,.
\end{equation}
\end{proof}

We will also need to use the celebrated FKG inequality (see \citep{FKG}, or \citep{Grimmett}, Theorem 2.4, p.34; see also Grimmett's book for some explanation of the terminology).

\begin{theorem}\label{TheoremFKG}
If $A$ and $B$ are both increasing (or both decreasing) events on the same measurable pair $(\Omega , \mathcal{F})$, then

\[
P(A \cap B)\,\geq\, P(A)\,P(B)\,.
\]
\end{theorem}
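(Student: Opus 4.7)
The plan is to prove the stronger functional version (often called Harris's inequality): for any two bounded increasing functions $f, g: \Omega \to \mathbb{R}$ on a finite product probability space, $\mathbb{E}[fg] \geq \mathbb{E}[f]\,\mathbb{E}[g]$. The FKG inequality for events then follows immediately by taking $f = \mathbf{1}_A$ and $g = \mathbf{1}_B$, which are increasing indicator functions when $A$ and $B$ are increasing events. In the percolation setting relevant to this paper, $(\Omega, \mathcal{F})$ is the product space $\{0,1\}^{V_N}$ equipped with a product Bernoulli measure, so the reduction to a finite product of totally ordered factors is immediate.

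I would first handle the one-dimensional base case. For a single coordinate with any probability measure $\mu$ on $\{0,1\}$ (or more generally on $\mathbb{R}$), and any two increasing functions $f, g$, the elementary identity $(f(x) - f(y))(g(x) - g(y)) \geq 0$ holds pointwise. Integrating over two independent copies $X, X' \sim \mu$ and expanding the product yields $2\,\mathbb{E}[fg] - 2\,\mathbb{E}[f]\,\mathbb{E}[g] \geq 0$, which is the desired inequality in dimension one.

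Next I would induct on the number of coordinates. Suppose the statement holds for product spaces with $n-1$ factors, and let $f, g$ be increasing on $\Omega = \Omega_1 \times \cdots \times \Omega_n$ with product measure. Writing $\omega = (\omega', \omega_n)$, define $F(\omega_n) := \mathbb{E}[f(\omega', \omega_n) \mid \omega_n]$ and analogously $G(\omega_n)$, where the conditional expectation integrates out $\omega'$ against the product measure on the first $n-1$ coordinates. By the inductive hypothesis applied to $f(\cdot, \omega_n)$ and $g(\cdot, \omega_n)$ for each fixed $\omega_n$, one has $\mathbb{E}[fg \mid \omega_n] \geq F(\omega_n)\,G(\omega_n)$. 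Moreover $F$ and $G$ are themselves increasing in $\omega_n$, since $\omega_n \leq \tilde\omega_n$ implies $f(\omega', \omega_n) \leq f(\omega', \tilde\omega_n)$ for every $\omega'$, and averaging over $\omega'$ preserves this inequality. Applying the one-dimensional base case to $F$ and $G$ then gives $\mathbb{E}[FG] \geq \mathbb{E}[F]\,\mathbb{E}[G] = \mathbb{E}[f]\,\mathbb{E}[g]$. Chaining the two inequalities via the tower property yields $\mathbb{E}[fg] \geq \mathbb{E}[f]\,\mathbb{E}[g]$, completing the induction.

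Finally, the case of two decreasing events reduces to the increasing case by complementation: if $A$ and $B$ are decreasing, then $A^c$ and $B^c$ are increasing, and the identity $P(A \cap B) = 1 - P(A^c) - P(B^c) + P(A^c \cap B^c)$ combined with the already-proven bound $P(A^c \cap B^c) \geq P(A^c)\,P(B^c)$ rearranges to $P(A \cap B) \geq P(A)\,P(B)$. The main delicate point is the inductive step: one must verify carefully both that the conditional averages $F$ and $G$ remain increasing in the surviving coordinate and that the monotonicity hypotheses are preserved under conditioning. Once those observations are in hand, the rest of the argument is bookkeeping with Fubini's theorem.
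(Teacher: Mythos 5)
Your proof is correct, but it is worth noting that the paper does not actually prove this statement at all: Theorem \ref{TheoremFKG} is quoted from the literature (the FKG paper and Grimmett's book, Theorem 2.4), so there is no in-paper argument to compare against. What you have written out is the standard Harris argument for product measures: the one-dimensional case via the pointwise inequality $(f(x)-f(y))(g(x)-g(y))\geq 0$ for two increasing (or two decreasing) functions, followed by induction on coordinates using that conditional averages over the first $n-1$ coordinates remain increasing in the last one, and finally the reduction of the decreasing case by complementation. All three steps are sound, and you correctly identify the one genuinely delicate point (monotonicity of $F$ and $G$ in the surviving coordinate). One caveat deserves emphasis: as literally stated in the paper, with an arbitrary probability measure on an arbitrary measurable pair $(\Omega,\mathcal{F})$, the inequality is false; it requires either a product (Bernoulli) measure, as in Harris's theorem and in the percolation application here, or more generally a measure satisfying the FKG lattice condition. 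You implicitly and correctly restrict to the product Bernoulli setting $\{0,1\}^{V_N}$ that the paper actually uses, which is exactly the right scope for the application, but your argument proves Harris's inequality rather than the full FKG inequality for monotone measures. Within that scope the proof is complete and more self-contained than the paper's bare citation.
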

\medskip

Define $F_N(n)$ as the event that there is an \emph{erroneously marked} black cluster of size greater or equal $n$, lying in the square of size $N \times N$ corresponding to the screen. (An erroneously marked black cluster is a black cluster on $G_N$ such that \emph{each} of the pixels in the cluster was wrongly coloured black after the $\theta-$thresholding).

Denote

\begin{equation}\label{15}
p_{out}(N)\,:=\,P(\,Y_{ij} \geq 1/2\, | \,Im_{ij} =0\, )\,,
\end{equation}

\noindent a probability of erroneously marking a white pixel outside of the image as black.

The next theorem is particularly useful when studying percolation on finite sublattices of the initial infinite lattice.

\begin{theorem}\label{Theorem2}
Suppose that $0 < p_{out}(N) < p_{c}^{site}$. There exists a constant $C_3 = C_3 (p_{out}(N)) > 0$ such that

\begin{equation}\label{16}
P_{p_{out}(N)} (F_N(n))\,\leq\,\exp (\,-n \,C_3 (p_{out}(N)))\,,\quad\mbox{for all}\quad n \geq \varphi_{im} (N)\,.
\end{equation}
\end{theorem}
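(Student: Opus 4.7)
The plan is to reduce to Theorem \ref{Theorem3} via a union bound over the $N^2$ possible starting vertices of a cluster. Enumerate the vertices of the $N \times N$ sub-lattice $\Lambda_N \subset \mathbb{T}^2$ and for each $v \in \Lambda_N$ let $C_v^{\Lambda_N}$ denote the (possibly empty) erroneously marked black cluster containing $v$ in the sub-lattice percolation, and let $C_v$ denote the corresponding open cluster of $v$ in full $\mathbb{T}^2$ site percolation with parameter $p_{out}(N)$. Since $\Lambda_N$ is a sub-graph of $\mathbb{T}^2$, one has $C_v^{\Lambda_N} \subseteq C_v$, and by translation invariance $P_{p_{out}(N)}(|C_v|\geq n) = P_{p_{out}(N)}(|C|\geq n)$, where $C$ is the cluster of the origin.

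The event $F_N(n)$ occurs iff there exists some $v \in \Lambda_N$ with $|C_v^{\Lambda_N}| \geq n$, so a union bound gives
\begin{equation*}
P_{p_{out}(N)}(F_N(n)) \;\leq\; \sum_{v\in\Lambda_N} P_{p_{out}(N)}(|C_v^{\Lambda_N}|\geq n) \;\leq\; N^2\, P_{p_{out}(N)}(|C|\geq n).
\end{equation*}
Since $p_{out}(N) < p_c^{site}$ by assumption, Theorem \ref{Theorem3} applies with $p_0 = p_{out}(N)$; for $n \geq \varphi_{im}(N)$ and $N$ large (so that $\varphi_{im}(N)\geq N(p_{out}(N))$, which is guaranteed by $\langle D2 \rangle$ since $\varphi_{im}(N) \to \infty$), it yields
\begin{equation*}
P_{p_{out}(N)}(F_N(n)) \;\leq\; N^2 \exp\bigl(-n\,\lambda_{site}(p_{out}(N))\bigr).
\end{equation*}

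The final step is to absorb the prefactor $N^2 = \exp(2\log N)$ into the exponential. Here assumption $\langle D2 \rangle$ does the work: since $\varphi_{im}(N)/\log N \to \infty$, for all $N$ sufficiently large and every $n \geq \varphi_{im}(N)$ one has $2\log N \leq \tfrac{1}{2}\,n\,\lambda_{site}(p_{out}(N))$, whence
\begin{equation*}
P_{p_{out}(N)}(F_N(n)) \;\leq\; \exp\!\Bigl(-\tfrac{1}{2}\,n\,\lambda_{site}(p_{out}(N))\Bigr).
\end{equation*}
Setting $C_3(p_{out}(N)) := \tfrac{1}{2}\lambda_{site}(p_{out}(N)) > 0$ gives the claimed bound \eqref{16}.

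The only real subtlety is the polynomial-vs-exponential comparison in the final step, and it is handled entirely by $\langle D2 \rangle$; everything else is a standard union bound plus the subcritical exponential decay supplied by Theorem \ref{Theorem3}. Note that Theorem \ref{TheoremFKG} (FKG) is not required for this lemma — it will presumably enter later, when proving the supercritical lower bound for detection on $G_N^{im}$ in Theorem \ref{Theorem1}.
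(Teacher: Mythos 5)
Your proof is correct, and it takes a genuinely more elementary route than the paper at one step. The paper also reduces to Theorem \ref{Theorem3} applied to each of the $N^2$ vertices (making the same monotonicity observation you do, that restricting clusters to the finite screen only helps), but instead of a union bound it invokes the FKG inequality for the decreasing events $\{|C(i,j)| < n\}$ to get $P(\text{all clusters} < n) \geq (1 - e^{-n\lambda_{site}})^{N^2}$, and then expands $1 - (1 - e^{-n\lambda_{site}})^{N^2}$ to extract the leading term $N^2 e^{-n\lambda_{site}}$. Your direct union bound $P(F_N(n)) \leq N^2 P(|C| \geq n)$ lands on exactly the same quantity without FKG, and your closing remark that FKG is not needed here is accurate --- the inequality $1-(1-x)^{m} \leq mx$ is just the union bound in disguise, so the paper's detour buys nothing. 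From there both arguments absorb the polynomial prefactor $N^2 = e^{2\log N}$ into the exponential using $\varphi_{im}(N) \gg \log N$ from $\langle D2\rangle$, yielding a constant $C_3$ strictly smaller than $\lambda_{site}(p_{out}(N))$; your explicit choice $C_3 = \tfrac{1}{2}\lambda_{site}$ is a clean instance of this. One shared caveat: both proofs implicitly require $N$ large enough (so that $\varphi_{im}(N) \geq N(p_{out}(N))$ and the prefactor absorption kicks in), which the theorem statement does not display but which is consistent with the ``for all $N > N(\sigma)$'' qualifier appearing later in Theorem \ref{Theorem1}. You handle this explicitly, which is if anything slightly more careful than the original.
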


\begin{proof} (Theorem \ref{Theorem2}):
Denote by $C(i,j)$ the largest cluster in the $N\times N$ screen (triangulated by diagonals of one orientation) containing the pixel with coordinates $(i, j)$, and by $C(0)$ the largest black cluster on the same $N \times N$ screen containing 0. It doesn't matter for this proof which point is denoted by 0. By Theorem \ref{Theorem3}, for all $i$, $j$: $1 \leq i, j \leq N$:

\begin{eqnarray}\label{19}
  P_{p_{out}(N)} (\,|C(0)| \geq n\,) & \leq & e^{-n\,\lambda_{site}(p_{out})}\,, \\
  P_{p_{out}(N)} (\,|C(i, j)| \geq n\,) & \leq & e^{-n\,\lambda_{site}(p_{out})}\,. \nonumber
\end{eqnarray}

\noindent Obviously, it only helped to inequalities (\ref{17}) and (\ref{19}) that we have limited our clusters to only a finite subset instead of the whole lattice $\mathbb{T}^2$. On a side note, there is no symmetry anymore between arbitrary points of the $N \times N$ finite subset of the triangular lattice; luckily, this doesn't affect the present proof.

Since $\{\,|C(0)| \geq n\,\}$ and $\{\,|C(i, j)| \geq n\,\}$ are increasing events (on the measurable pair corresponding to the standard random-graph model on $G_N$), we have that $\{\,|C(0)| < n\,\}$ and $\{\,|C(i, j)| < n\,\}$ are decreasing events for all $i$, $j$. By FKG inequality for decreasing events,

\begin{eqnarray*}
  P_{p_{out}(N)} ( \,|C(i, j)| < n\, \,\mbox{for all}\,\, i, j, 1 \leq i, j \leq N\,) & \geq &  \\
  \prod\prod_{1 \leq i, j \leq N} P_{p_{out}(N)} ( \,|C(i, j)| < n\,) & \geq & (\mbox{by (\ref{19})}) \\
   & \geq & {\bigr(\,1 - e^{-n\,\lambda_{site}(p_{out})}\,\bigr)^{N^2}}\,.
\end{eqnarray*}

\noindent It follows that

\begin{eqnarray*}
  P_{p_{out}(N)} (F_N(n)) & = & P_{p_{out}(N)} \bigr( \,\exists (i, j),\, 1 \leq i, j \leq N\,: \,|C(i, j)| \geq n\, \bigr) \\
    & \leq & 1 -  {\bigr(\,1 - e^{-n\,\lambda_{site}(p_{out})}\,\bigr)^{N^2}} \\
    &=& 1 - \sum_{k=0}^{N^2} {(-1)}^k \,C_{N^2}^k \, e^{-n\,\lambda_{site}(p_{out})\,k} \\
    &=& \sum_{k=1}^{N^2} {(-1)}^{k-1} \,C_{N^2}^k \, e^{-n\,\lambda_{site}(p_{out})\,k} \\
    &=& N^2 e^{-n\,\lambda_{site}(p_{out})}\,+\,o\bigr(N^2 e^{-n\,\lambda_{site}(p_{out})}\bigr)\,,
\end{eqnarray*}

\noindent because we assumed in (\ref{16}) that $n \geq \varphi_{im} (N)$, and $\varphi_{im} (N) \gg \log N$. Moreover, we see immediately that Theorem \ref{Theorem2} follows now with some $C_3$ such that $0 < C_3 (p_{out}(N)) < \lambda_{site}(p_{out}(N))$.
\end{proof}

Now we establish the following useful lemma. Let $G_N$ denote the $N \times N$ subset of $\mathbb{T}^2$, as defined in Section \ref{Section_Thresholding} of the present paper. denote its canonical matching graph by $G_N^*$. We remind that $\mathbb{T}^2$ is self-matching, and refer to \citep{Kesten}, Section 2.2 for the necessary definitions. Assuming that $n \leq N$, denote $A_n$ be the event that there is an open (i.e., black) path in the rectangle $[0, n] \times [0, n]$ joining some vertex on its left side to some vertex on its right side. Similarly, let $B_n$ denote the event that there exists a closed (i.e., white) path on $G_n^*$ joining a vertex on the top side of $G_n^*$ to a vertex on its bottom side.

\begin{remark}\label{Imbedding_Crossings}
When speaking about black or white crossings of rectangles, we are free to assume that $\mathbb{T}^2$ is embedded in the plane as a $\mathbb{Z}^2$ lattice with diagonals. See \citep{Kesten} for a discussion of connections between percolation and various planar embeddings of regular lattices.
\end{remark}

\begin{lemma}\label{Lemma_Symmetry_Crossings} Let $0 < p < 1$ be a real number. Consider standard site percolation with probability $p$ on the triangular lattice. Then
\par
\begin{enumerate}

\item
Either $A_n$ or $B_n$ occurs. Moreover, $A_n \cap B_n = \emptyset\,.$

\item

\begin{equation}\label{27}
P_p (A_n) + P_p(B_n) = 1\,.
\end{equation}

\item

\begin{equation}\label{28}
P_p (A_n) + P_{1-p}(A_n) = 1\,.
\end{equation}

\end{enumerate}
\end{lemma}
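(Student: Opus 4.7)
The plan is to establish part (1) first, since (2) and (3) follow from it by elementary arguments. For part (1), the disjointness is a short planar-topology argument: using the embedding of $\mathbb{T}^2$ as $\mathbb{Z}^2$ with diagonals (see Remark \ref{Imbedding_Crossings}), any black left-right path $\pi_A$ in the square $[0,n]\times[0,n]$ and any white top-bottom path $\pi_B$ in the same square must share at least one vertex by a Jordan-curve type argument. That common vertex would have to be simultaneously black (because it lies on $\pi_A$) and white (because it lies on $\pi_B$), which is impossible. For the ``at least one of $A_n$, $B_n$ occurs'' direction, I would use the self-matching property of $\mathbb{T}^2$: consider the set $S$ of all black vertices of $G_n$ that can be reached from the left side by a black path inside $[0,n]\times[0,n]$. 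If $S$ meets the right side, then $A_n$ occurs; otherwise, the ``outer boundary'' of $S$, taken in the matching graph $G_n^*$, produces a white path from the top side to the bottom side, so $B_n$ occurs. This is the classical Whitney-type duality argument as carried out, e.g., in Kesten, Section 2.2.

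Part (2) is then immediate: (1) says $\{A_n, B_n\}$ is a partition of the sample space (up to a null set, and in fact exactly), so
\begin{equation*}
P_p(A_n) + P_p(B_n) = P_p(A_n \cup B_n) = 1.
\end{equation*}

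For part (3), I would combine two symmetries. First, the $90^{\circ}$ rotation of the square $[0,n]\times[0,n]$ exchanges horizontal and vertical crossings and preserves the product Bernoulli measure (this uses that the embedded triangular lattice with diagonals of a fixed orientation is rotationally symmetric about the center of the square, up to a relabeling of sites that does not affect the law of the site-colorings). Hence the probability of a black top-to-bottom crossing under $P_p$ equals $P_p(A_n)$. Second, the color-flip bijection $\omega \mapsto 1-\omega$ on configurations sends the law $P_p$ to $P_{1-p}$ and maps ``white top-to-bottom crossing'' to ``black top-to-bottom crossing.'' Composing, $P_p(B_n) = P_{1-p}(A_n)$, and substituting into (2) gives $P_p(A_n) + P_{1-p}(A_n) = 1$.

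The main obstacle is part (1): being fully rigorous about the topological duality requires care in setting up the embedding of $G_n$ and $G_n^*$, aligning the four sides of the rectangle properly so that a black left-right crossing in $G_n$ and a white top-bottom crossing in $G_n^*$ are genuinely dual events, and checking that one of the two must indeed occur for every configuration (not just with probability one). The argument is standard but notationally delicate, and most textbook treatments (Kesten Section 2.2; Grimmett Chapter 11) proceed by an explicit interface construction. The color-swap and rotation arguments needed for (3) are routine once the embedding conventions used in (1) are fixed.
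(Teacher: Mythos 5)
Your proposal is correct and follows essentially the same route as the paper: part (1) via the standard Whitney/Kesten duality for the self-matching triangular lattice (the paper simply cites Kesten's Proposition 2.2 for this), part (2) as an immediate consequence, and part (3) via the color-flip identity $P_p(B_n)=P_{1-p}(A_n)$ substituted into (2). The only difference is one of explicitness: you spell out the $90^{\circ}$ rotation (or, more cleanly, the reflection across the main diagonal, which preserves the fixed diagonal orientation exactly) needed to convert the top-bottom crossing back into the left-right event $A_n$, a step the paper leaves implicit in its appeal to the isomorphism $G_n\cong G_n^*$.
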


\begin{proof} (Lemma \ref{Lemma_Symmetry_Crossings}). Statement 1 of the Lemma directly follows from Proposition 2.2 from \citep{Kesten} (see also pp.398 - 402 of that book: there a rigorous proof of this proposition is presented, including necessary topological considerations). Statement 2 is an immediate consequence of Statement 1 and definitions of percolation measures on $G_n$ and $G_n^*$.

To complete the proof, note that $G_N$ and $G_N^*$ are isomorphic, by Example (iii), pp. 19-20 of \citep{Kesten}. Since by definition a vertex of $G_n^*$ is black with probability $1-p$, we have that

\begin{equation}\label{29}
P_p (B_n) = P_{1-p}(A_n)\,.
\end{equation}

This proves (\ref{28}).
\end{proof}

First we prove the following theorem:

\begin{theorem}\label{Theorem4}
Consider site percolation on $\mathbb{T}^2$ lattice with percolation probability $p > p_c^{site} = 1/2$. Let $A_n$ be the event that there is an open path in the rectangle $[0, n] \times [0, n]$ joining some vertex on its left side to some vertex on its right side. Let $M_n$ be the maximal number of vertex-disjoint open left-right crossings of the rectangle $[0, n] \times [0, n]$. Then there exist constants $C_4 = C_4 (p) > 0$, $C_5 = C_5 (p) > 0$, $C_6 = C_6 (p) > 0$ such that

\begin{equation}\label{20}
P_p (A_n) \,\geq\, 1 - (n+1)\,e^{-C_4\,n}\,,
\end{equation}

\begin{equation}\label{21}
P_p (\,M_n \leq C_5\,n\,) \,\leq\, e^{-C_6\,n}\,,
\end{equation}

\noindent and both inequalities holds for all $n \geq N_1 (p)$.
\end{theorem}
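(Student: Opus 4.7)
The plan is to treat the two statements separately, in each case starting from the duality encoded in Lemma \ref{Lemma_Symmetry_Crossings}.

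For the crossing probability (\ref{20}) I would use equation (\ref{28}) to write $P_p(A_n) = 1 - P_{1-p}(A_n)$. Since $p > 1/2$, the dual parameter $1-p$ is strictly subcritical, so Theorem \ref{Theorem3} applies at parameter $1-p$. A left-right open path in $[0,n]\times[0,n]$ forces at least one of the $n+1$ vertices on the left boundary to lie in an open cluster of size at least $n+1$ (because the path has graph-length at least $n$). Applying Theorem \ref{Theorem3} to each such starting vertex and taking a union bound gives
\[
P_{1-p}(A_n) \,\leq\, (n+1)\, e^{-(n+1)\,\lambda_{site}(1-p)}
\]
for all $n+1 \geq N(1-p)$, which yields (\ref{20}) with $C_4 := \lambda_{site}(1-p)$.

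For (\ref{21}) I would partition $[0,n]\times[0,n]$ into $K := \lfloor n/L \rfloor$ vertex-disjoint horizontal slabs of height $L$, where $L = L(p)$ is a constant to be chosen. Horizontal open crossings in distinct slabs are automatically vertex-disjoint, so if every slab contains one then $M_n \geq K \geq C_5 n$ with $C_5 := 1/(2L)$. By a union bound over the $K \leq n$ slabs, the probability that some slab fails to contain a crossing is at most $n$ times the per-slab failure probability; this gives the required $e^{-C_6 n}$ bound provided one can establish the strip-crossing estimate
\[
P_p(\text{horizontal crossing of an } n \times L \text{ rectangle}) \,\geq\, 1 - e^{-c\,n}
\]
for some $c = c(p,L) > 0$ and $L$ large enough.

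The main obstacle is precisely this strip-crossing estimate. The duality argument used for (\ref{20}) is insufficient: applied to an $n \times L$ rectangle it only gives $(n+1)\,e^{-L\,\lambda_{site}(1-p)}$, which grows with $n$ for fixed $L$. The standard remedy is a block renormalization in the spirit of Grimmett--Marstrand: cover the strip by overlapping $3L \times L$ blocks, declare a block \emph{good} if it contains a full horizontal crossing together with vertical crossings of its left and right thirds (each of these events has probability at least $1 - \epsilon(L)$ with $\epsilon(L) \to 0$ as $L \to \infty$, by (\ref{20}) applied at scale $L$ and the FKG inequality), and observe that a run of consecutive good blocks automatically concatenates to yield a horizontal crossing of the whole strip. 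A Peierls-type estimate on the one-dimensional bad-block process then delivers exponential decay in $n$. This strip-crossing result is classical for supercritical planar site percolation (see Grimmett, \emph{Percolation}, Chapter 7, or Kesten, Chapter 5) and could alternatively be invoked directly.
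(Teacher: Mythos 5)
Your treatment of (\ref{20}) is essentially the paper's own proof: both arguments use the duality $P_p(A_n)=1-P_{1-p}(A_n)$ from Lemma \ref{Lemma_Symmetry_Crossings}, note that a crossing at the subcritical parameter $1-p$ forces one of the $n+1$ left-boundary vertices into a cluster of size at least $n$, and finish with Theorem \ref{Theorem3} and a union bound. That part is fine.

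For (\ref{21}), however, there is a genuine gap: the strip-crossing estimate to which you reduce the problem is false. For fixed height $L$, the probability that an $n\times L$ rectangle contains a left-right open crossing does not tend to $1$; it tends to $0$ exponentially fast in $n$. Indeed, each of the $n+1$ columns $\{k\}\times[0,L]$ is entirely closed with probability $(1-p)^{L+1}>0$, these events are independent, and any entirely closed column separates the left side from the right side, so
\[
P_p\bigl(\text{left-right crossing of } [0,n]\times[0,L]\bigr)\,\leq\,\bigl(1-(1-p)^{L+1}\bigr)^{n+1}\,\longrightarrow\,0\,.
\]
The renormalization sketch cannot repair this: in the one-dimensional array of blocks covering a strip, a \emph{single} bad block severs the crossing, so there is no Peierls contour to sum over, and the probability of at least one bad block among the $\sim n/L$ blocks tends to $1$ for fixed $L$. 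More fundamentally, no decomposition of $[0,n]^2$ into $O(n)$ disjoint fixed-height slabs can yield (\ref{21}), since each slab individually fails to be crossed with probability tending to one. The paper obtains (\ref{21}) from (\ref{20}) by a different mechanism: Theorem 2.45 of Grimmett's book, which bounds $P_p(N(A)\leq k)$, where $N(A)$ is the maximal number of disjoint witnesses of an increasing event $A$, by $(p/(p-p'))^{k}$ times the failure probability $1-P_{p'}(A)$ at a reduced parameter $p'<p$. One applies (\ref{20}) at an intermediate supercritical parameter $p'\in(p_c^{site},p)$ and takes $C_5$ small enough that the factor $(p/(p-p'))^{C_5 n}$ is absorbed by the exponential decay; this is the derivation on pp.\ 49--50 of Grimmett that the paper cites. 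You would need to replace the slab argument by this (or an equivalent) disjoint-witnesses inequality.
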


\begin{proof} (Theorem \ref{Theorem4}): Let $LR_k(n)$, $0 \leq k \leq n$, be the event that the point $(0, k)$ of $G_n$ is connected by a \emph{white} (in other words, closed) path (that lies in the interior of $G_n$) to some vertex on the right border of $G_n$. Denote by $LR(n)$ the event that there exists a closed left-right crossing of $G_n$. Let $C((0, k))$ denotes the white cluster containing the point $(0, k)$, where we make a convention that this cluster is considered on the whole lattice $\mathbb{T}^2$. Then obviously

\begin{equation}\label{30}
    LR_k(n) \,\subseteq\, \{\,\omega:\, |C((0, k))| \geq n \,\}
\end{equation}

\noindent and

\begin{equation}\label{31}
    LR(n) \,\subseteq\, \bigcup_{k=0}^{n}\, LR_k(n) \,.
\end{equation}

\noindent Now (\ref{31}) gives us

\begin{equation}\label{32}
P_{1-p}(LR(n)) \,\leq\, \sum_{k=0}^{n} P_{1-p}(LR_k(n)) \,\leq\, (n+1)\,\max_{0 \leq k \leq 1} P_{1-p}(LR_k(n))\,.
\end{equation}

\noindent Since $1 - p < p_c^{site}$, we get from (\ref{30}) and Theorem \ref{Theorem3} that for all $k$

\begin{equation}\label{33}
P_{1-p}(LR_k(n)) \,\leq\, P_{1-p}(\{\, |C((0, k))| \geq n \,\}) \,\leq\, e^{-C_4\,n}\,.
\end{equation}

\noindent Combining (\ref{32}) and (\ref{33}) yields

\begin{equation}\label{34}
P_{1-p}(A_n) \,=\, P_{1-p}(LR(n)) \,\leq\, (n+1)\,e^{-C_4\,n}\,.
\end{equation}

\noindent Altogether, (\ref{28}) and (\ref{34}) imply (\ref{20}). This proves the first half of Theorem \ref{Theorem4}.

As about the second part of the proof, (\ref{21}) is deduced from (\ref{20}) with the help of Theorem 2.45 of \citep{Grimmett}. The derivation itself is presented at pp. 49-50 of \citep{Grimmett}; the only difference is that in our case one has to change "edges" by "vertices" in the proof from the book. Everything else works the same, since Theorem 2.45 is valid for all Bernoulli product measures on regular lattices; in particular, Theorem 2.45 applies for site percolation as well. This completes the proof of Theorem \ref{Theorem4}.
\end{proof}

\begin{proof} (Theorem \ref{Theorem1}): $\quad$ I.$\quad$ First we prove the complexity result.

The $\theta(N)-$thresholding gives us $\{{\overline{Y}}_{i,j}\}_{i,j=1}^{N}$ and $\overline{G}_N$ in $O(N^2)$ operations. This finishes the analysis of Step 1.

As for Step 2, it is known (see, for example, \citep{Aho_Hopcroft_Ullman}, Chapter 5, or \citep{Tarjan}) that the standard depth-first search finishes its work also in $O(N^2)$ steps. It takes not more than $O(N^2)$ operations to save positions of all pixels in all clusters to memory , since one has no more than $N^2$ positions and clusters. This completes analysis of Step 2 and shows that Algorithm 1 is linear in the size of the input data.\par\smallskip

II. Now we prove the bound on the probability of false detection. Denote

\begin{equation}\label{15}
p_{out}(N)\,:=\,P(\,Y_{ij} \geq 1/2\, | \,Im_{ij} =0\, )\,,
\end{equation}

\noindent a probability of erroneously marking a white pixel outside of the image as black. Under assumptions of Theorem \ref{Theorem1}, $p_{out}(N) < p_{c}^{site}$. The exponential bound on the probability of false detection follows trivially from Theorem \ref{Theorem2}.\par\smallskip

III. It remains to prove the lower bound on the probability of true detection. Suppose that we have an object in the picture that satisfies assumptions of Theorem \ref{Theorem1}. Consider any $\varphi_{im} (N) \times \varphi_{im} (N)$ square in this image. After $\theta-$thresholding of the picture by Algorithm 1, we observe on the selected square a site percolation with probability

\[
p_{im}(N) \,:=\, P(\,Y_{ij} \geq 1/2\, | \,Im_{ij} =1\, ) \,>\, p_{c}^{site}\,.
\]

\noindent Then, by (\ref{20}) of Theorem \ref{Theorem4}, there exists $C_4 = C_4 (p_{im}(N))$ such that there will be \emph{at least one} cluster of size not less than $\varphi_{im} (N)$ (for example, one could take any of the existing left-right crossings as a part of such cluster), provided that $N$ is bigger than certain $N_1(p_{im}(N))$; and all that happens with probability at least

\[
1 - n\,e^{-C_4\,n} \,>\, 1 - e^{-C_3\,n}\,,
\]

\noindent for some $C_3$: $0 < C_3 < C_4$. Note that one can always weaken the constant $C_3$ above in such a way that the estimate above starts to hold for all $n \geq 1$. Theorem \ref{Theorem1} is proved.

\end{proof}

\smallskip
\noindent {\bf Acknowledgments.} The authors would like to thank Remco van der Hofstad, Artem Sapozhnikov and Shota Gugushvili for helpful discussions. \\

\bibliographystyle{plainnat}
\bibliography{Randomized_Algorithms_and_Percolation}


\end{document}